\title{On a Conjecture of Erd\H os, Gallai, and Tuza}
\author{Gregory J.~Puleo}
\renewcommand{\subset}{\subseteq}
\newcommand{\join}{\vee}
\newcommand{\iso}{\cong}
\newcommand{\lfrac}[2]{#1/#2}
\newcommand{\bad}[1]{\textcolor{red}{\emph{#1}}}
\newcommand{\citethis}[1]{\bad{[CITE THIS]}}
\newcommand{\sizeof}[1]{\left\lvert{#1}\right\rvert}
\newcommand{\aph}{\alpha_1}
\newcommand{\hang}[1]{\makebox[0cm]{#1}}
\newcommand{\hangs}[1]{\hang{\ #1}}
\let\oldtau\tau
\renewcommand{\tau}{\oldtau_1}
\newcommand{\taub}{\oldtau_{\mathrm{B}}}
\newtheorem{proposition}{Proposition}[section]
\newtheorem{conjecture}[proposition]{Conjecture}
\newtheorem{lemma}[proposition]{Lemma}
\newtheorem{theorem}[proposition]{Theorem}
\newtheorem{corollary}[proposition]{Corollary}
\theoremstyle{definition}
\theoremstyle{remark}
\begin{document}
\begin{abstract}
  Erd\H os, Gallai, and Tuza posed the following problem: given an
  $n$-vertex graph $G$, let $\tau(G)$ denote the smallest size of a
  set of edges whose deletion makes $G$ triangle-free, and let
  $\aph(G)$ denote the largest size of a set of edges containing at
  most one edge from each triangle of $G$. Is it always the case that
  $\aph(G) + \tau(G) \leq n^2/4$?

  We have two main results. We first obtain the upper bound $\aph(G) +
  \tau(G) \leq 5n^2/16$, as a partial result towards the Erd\H
  os--Gallai--Tuza conjecture.  We also show that always $\aph(G) \leq
  n^2/2 - m$, where $m$ is the number of edges in $G$; this bound is
  sharp in several notable cases.
\end{abstract}
\maketitle
\section{Introduction}
Given an $n$-vertex graph $G$, say that a set $A \subset E(G)$ is
\emph{triangle-independent} if it contains at most one edge from each
triangle of $G$, and say that $X \subset E(G)$ is a \emph{triangle
  edge cover} if $G \setminus X$ is triangle-free. Throughout this paper,
$\aph(G)$ denotes the maximum size of a triangle-independent set
of edges in $G$, while $\tau(G)$ denotes the minimum size of a triangle
edge cover in $G$.

Erd\H os~\cite{largebip} showed that every $n$-vertex graph $G$ has a
bipartite subgraph with at least $\sizeof{E(G)}/2$ edges, which implies that
$\tau(G) \leq \sizeof{E(G)}/2 \leq n^2/4$. Similarly, if $A$ is
triangle-independent, then the subgraph of $G$ with edge set $A$ is
clearly triangle-free; by Mantel's~Theorem, this implies that $\aph(G)
\leq n^2/4$.

Intuitively, $\aph(G)$ and $\tau(G)$ cannot both be large: if
$\tau(G)$ is close to $n^2/4$, then $\sizeof{E(G)}$ is close to
$n^2/2$, which makes it difficult to find a large triangle-independent
set of edges. Erd\H os, Gallai, and Tuza formalized this intuition
with the following conjecture.
\begin{conjecture}[Erd\H os--Gallai--Tuza \cite{EGT}]\label{coj:EGT}
  For every $n$-vertex graph $G$, $\aph(G) + \tau(G) \leq n^2/4$.
\end{conjecture}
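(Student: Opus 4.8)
The plan is to separate sparse from dense graphs, dispose of the sparse case essentially for free, and attack the dense case with two quantitative levers: Erd\H os's bound $\tau(G) \le m/2$ and the extremal inequality $\aph(G) \le n^2/2 - m$ announced in the abstract.

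\textit{The sparse reduction.} If $A$ is triangle-independent, then the subgraph $(V(G), A)$ is triangle-free, since a triangle of this subgraph would be a triangle of $G$ with all three of its edges in $A$. Writing $b(G)$ for the maximum number of edges in a triangle-free subgraph of $G$, this gives $\aph(G) \le b(G)$, while deleting the edges outside a largest triangle-free subgraph shows $\tau(G) = m - b(G)$. Adding these, $\aph(G) + \tau(G) \le b(G) + (m - b(G)) = m$, so Conjecture~\ref{coj:EGT} holds for every graph with $m \le n^2/4$. From now on assume $m > n^2/4$.

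\textit{The dense case.} For $m > n^2/4$, combining $\aph(G) \le n^2/2 - m$ with $\tau(G) \le m/2$ gives $\aph(G) + \tau(G) \le n^2/2 - m/2$, and together with the sparse bound $\aph(G) + \tau(G) \le m$ this yields $\aph(G) + \tau(G) \le n^2/3$ unconditionally (the two bounds coincide at $m = n^2/3$); a sharper bound on $\aph(G)$ in the range where $b(G)$ lies well below $n^2/4$ — where triangle-independence is far more restrictive than triangle-freeness — should bring this down to the $5n^2/16$ of the abstract. To reach the conjectured $n^2/4$ one needs a stability-type strengthening: since $\tau(G) = m - b(G)$, the inequality $\aph(G) + \tau(G) \le n^2/4$ is equivalent to $\aph(G) \le b(G) - (m - n^2/4)$, i.e.\ to the assertion that the bound $\aph(G) \le b(G)$ can be improved by the full excess of edges above the Mantel threshold. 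I would attempt this by fixing an optimal triangle edge cover $X$, so that $G - X$ is a largest triangle-free subgraph and, when $b(G)$ is near $n^2/4$, is close to a balanced complete bipartite graph by the stability version of Mantel's theorem; each $e \in X$ then lies in a triangle two of whose edges survive in $G - X$, and triangle-independence forces $A$ to omit an edge from each such triangle, which one would assemble into a charging argument showing that $A$ must avoid roughly $m - n^2/4$ edges beyond what triangle-freeness alone requires.

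The step I expect to be the main obstacle is carrying out this charging for dense graphs that are \emph{not} structurally close to $K_n$ — in particular graphs with $m$ between about $n^2/4$ and $n^2/3$ whose largest triangle-free subgraph is far below $n^2/4$, so that $G - X$ is far from bipartite and there is no Tur\'an-type skeleton to charge against. Here one seems to need a genuine structural classification of the near-extremal configurations (balanced blow-ups of small graphs and perturbations of $K_n$) rather than one more clean inequality; indeed the two-lever argument provably cannot beat $n^2/3$, since $b(G)$ can be within $o(n^2)$ of $m/2$. A realistic intermediate target is therefore to push the dense-case analysis far enough to establish $\aph(G) + \tau(G) \le 5n^2/16$, and to isolate the full conjecture as the statement $\aph(G) \le b(G) - \max(0,\, m - n^2/4)$.
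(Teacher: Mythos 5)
The statement here is Conjecture~\ref{coj:EGT} itself, which is open: the paper does not prove it, and neither does your proposal. What you establish rigorously is, first, the sparse case: writing $b^*(G)$ for the maximum number of edges in a triangle-free subgraph of $G$ (your ``$b(G)$''; note the paper reserves $b(G)$ for the largest number of \emph{vertices} inducing a bipartite subgraph, a different quantity), you correctly obtain $\aph(G) \le b^*(G)$ and $\tau(G) = m - b^*(G)$, hence $\aph(G) + \tau(G) \le m$, which settles the conjecture whenever $m \le n^2/4$. Second, assuming $\aph(G) \le n^2/2 - m$ --- which you import from the abstract without proof; this is Theorem~\ref{thm:match} and requires its own nontrivial argument --- you correctly derive the unconditional bound $\aph(G) + \tau(G) \le n^2/3$. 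But $n^2/3 > n^2/4$, so the entire regime $m > n^2/4$ is left unresolved. The stability-and-charging programme you sketch for that regime (proving $\aph(G) \le b^*(G) - (m - n^2/4)$ by charging omitted $A$-edges against triangles created by the excess edges) is a plan rather than an argument, and, as you observe yourself, it has no purchase on dense graphs whose largest triangle-free subgraph is far from a balanced complete bipartite graph. The core of the conjecture is therefore untouched; the genuine gap is the whole of the dense case.

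For comparison, the paper's actual partial result, $\aph(G)+\tau(G) \le 5n^2/16$ (Corollary~\ref{cor:nearperf}), is reached by a mechanism different from your two levers: it proves $d_A(u)+d_A(v) \le b(G)$ for every edge $uv$ (because $N_A(u) \cup N_A(v)$ induces a bipartite subgraph when $A$ is triangle-independent), deduces $\aph(G) \le nb(G)/4$ by Cauchy--Schwarz, and pairs this with $\taub(G) \le n^2/4 - b(G)^2/4$, optimizing over $b(G)$. Your own accounting shows that combining $\aph(G)+\tau(G) \le m$ with $\tau(G) \le m/2$ and $\aph(G) \le n^2/2 - m$ cannot beat $n^2/3$, so even to reach the $5n^2/16$ benchmark you would need the induced-bipartite-subgraph idea (or something comparable), not a refinement of the two inequalities you use. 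If you want to salvage part of your write-up as a contribution, the cleanest piece is the observation that the conjecture is equivalent to $\aph(G) \le b^*(G) - \max(0,\, m - n^2/4)$, which is a reasonable reformulation but not a proof.
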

The conjecture is sharp, if true: consider the graphs $K_{n}$ and
$K_{n/2, n/2}$, where $n$ is even. We have $\aph(K_n) = n/2$ and
$\tau(K_n) = {n \choose 2} - n^2/4$, while $\aph(K_{n/2, n/2}) =
n^2/4$ and $\tau(K_{n/2, n/2}) = 0$.  In both cases, $\aph(G) +
\tau(G) = n^2/4$, but a different term dominates in each case. As
observed by Erd\H os, Gallai, and Tuza, the difficulty of the
conjecture lies in the variety of graphs for which the conjecture is
sharp: any proof of the conjecture would need to account for both
$K_n$ and $K_{n/2,n/2}$ without any waste.

Erd\H os, Gallai, and Tuza~\cite{EGT} considered the conjecture on
graphs for which every edge lies in a triangle, and proved that there
is a positive constant $c$ such that $\aph(G) + \tau(G) \leq
\sizeof{E(G)} - c\sizeof{E(G)}^{1/3}$ and $\aph(G) + \tau(G) \leq
\sizeof{E(G)} - c\sizeof{V(G)}^{1/2}$ for such graphs. Aside from the
original paper of Erd\H os, Gallai, and Tuza, no other work appears to
have been done on the conjecture. The conjecture also appears as
Problem~46 in \cite{tuza2001unsolved}, a list of unsolved
combinatorial problems.

In this paper, we present two partial results towards
Conjecture~\ref{coj:EGT}.

In Section~\ref{sec:indbip}, we extend some ideas of Erd\H os,
Faudree, Pach and Spencer~\cite{howbip} in order to obtain the bound
$\aph(G) + \tau(G) \leq 5n^2/16$.  In Section~\ref{sec:aphbound}, we
obtain the bound $\aph(G) \leq n^2/2 - m$, where $m = \sizeof{E(G)}$,
and characterize the graphs for which equality holds. When $n$ is
even, this bound is sharp for both $K_n$ and $K_{n/2,n/2}$, which
makes it an encouraging step towards the Erd\H os--Gallai--Tuza
Conjecture.

\section{Induced Bipartite Subgraphs}\label{sec:indbip}
In this section, we will focus on the relationship between
triangle-free subgraphs of $G$ and bipartite subgraphs of $G$.  The
problem of finding a largest bipartite subgraph of a graph $G$ is
well-studied, and clearly any bipartite subgraph of $G$ is
triangle-free, so we can reasonably hope to apply some of the existing
literature on bipartite subgraphs to our current problem (see
\cite{Bollobas} and \cite{PoljakTuza} for surveys of this literature).

We define some useful notation. For any graph $G$, let $\taub(G)$
denote the smallest size of an edge set $X$ such that $G \setminus X$ is
bipartite, and let $b(G)$ denote the largest size of a vertex set $B$
such that $G[B]$ is bipartite.  Clearly, $\tau(G) \leq \taub(G)$, so
we seek bounds on $\aph(G) + \taub(G)$. When $A \subset E(G)$, we will
abuse notation slightly by identifying $A$ with the spanning subgraph
of $G$ having edge set $A$. This yields notation like $N_A(v)$,
referring to the neighborhood of a vertex $v$ in the spanning subgraph
of $G$ with edge set $A$.

The relationship between $\tau(G)$ and $\taub(G)$ has been studied
before.  Erd\H os \cite{Erdos-biptri} asked which graphs $G$ satisfy
$\tau(G) = \taub(G)$. The question was pursued by Bondy, Shen,
Thomass\'e and Thomassen~\cite{BSTT}, who proved that $\tau(G) =
\taub(G)$ when $\delta(G) \geq 0.85\sizeof{V(G)}$, and later by
Balogh, Keevash, and Sudakov~\cite{Balogh}, who proved that $\tau(G) =
\taub(G)$ when $\delta(G) \geq 0.79\sizeof{V(G)}$.

Erd\H os, Faudree, Pach, and Spencer~\cite{howbip} studied the problem
of finding a largest bipartite subgraph of a triangle-free graph,
using the observation that if $uv$ is an edge of a triangle-free graph,
then $G[N(u) \cup N(v)]$ is an induced bipartite subgraph of
$G$. Here, we use a similar observation:
\begin{lemma}\label{lem:edgebip}
  For any graph $G$, any triangle-independent set $A \subset E(G)$, and any edge $uv \in E(G)$,
  \[ d_A(u) + d_A(v) \leq b(G). \]
\end{lemma}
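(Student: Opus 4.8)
The plan is to adapt the Erd\H os--Faudree--Pach--Spencer observation, using the neighborhoods taken in the spanning subgraph $A$ rather than the full neighborhoods in $G$. Concretely, I would take $B = N_A(u) \cup N_A(v)$ as the candidate vertex set witnessing the bound, and establish two things: that $|B| = d_A(u) + d_A(v)$, and that $G[B]$ is bipartite with parts $N_A(u)$ and $N_A(v)$.

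First I would show that $N_A(u) \cap N_A(v) = \emptyset$. If some vertex $w$ lay in both $N_A(u)$ and $N_A(v)$, then $uw \in A$ and $vw \in A$, and since $A \subseteq E(G)$ and moreover $uv \in E(G)$ by hypothesis, the vertices $u$, $v$, $w$ would be pairwise adjacent in $G$, hence span a triangle of $G$ with two of its edges ($uw$ and $vw$) lying in $A$; this contradicts the triangle-independence of $A$. (Here $w \notin \{u,v\}$ automatically, since $uw \in A$ and $vw \in A$ and $G$ has no loops.) Consequently $|B| = |N_A(u)| + |N_A(v)| = d_A(u) + d_A(v)$, and $(N_A(u), N_A(v))$ is a genuine partition of $V(G[B])$.

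Next I would check that $G[B]$ contains no edge within $N_A(u)$ and none within $N_A(v)$, which together with the previous paragraph shows $G[B]$ is bipartite. If $w_1 w_2 \in E(G)$ with $w_1, w_2 \in N_A(u)$, then $w_1$ and $w_2$ are distinct and both distinct from $u$, while $uw_1, uw_2 \in A$; thus $u$, $w_1$, $w_2$ span a triangle of $G$ with two edges in $A$, again contradicting triangle-independence. The symmetric argument rules out edges inside $N_A(v)$. Hence $G[B]$ is bipartite, so $b(G) \geq |B| = d_A(u) + d_A(v)$, as claimed.

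I do not expect a substantive obstacle here; the proof is short and essentially forced once one replaces $N$ by $N_A$. The only points needing care are verifying that the triples of vertices produced in the two triangle arguments are genuinely three distinct vertices (so that they really do form triangles of $G$), and noticing that the hypothesis $uv \in E(G)$ is used in an essential way --- it is exactly what promotes a common $A$-neighbor $w$ of $u$ and $v$ into a triangle of $G$.
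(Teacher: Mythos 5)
Your proof is correct and is essentially the paper's own argument: both take $B = N_A(u) \cup N_A(v)$, use triangle-independence to show the two neighborhoods are disjoint and each independent in $G$, and conclude $G[B]$ is bipartite on $d_A(u)+d_A(v)$ vertices. You simply spell out the details (distinctness of the triangle vertices, the role of the hypothesis $uv \in E(G)$) that the paper leaves implicit.
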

\begin{proof}
  Since $A$ is triangle-independent, the sets $N_A(u)$ and $N_A(v)$
  are independent and disjoint. Hence $G[N_A(u) \cup N_A(v)]$ is
  bipartite with $d_A(u) + d_A(v)$ vertices.
\end{proof}
\begin{lemma}\label{lem:bip-aph}
  For any graph $G$,
  \[ \aph(G) \leq \frac{nb(G)}{4\hangs.} \]
\end{lemma}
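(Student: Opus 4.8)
The plan is to fix a maximum triangle-independent set $A \subset E(G)$, so that $\sizeof{A} = \aph(G)$, and then to convert the local inequality of Lemma~\ref{lem:edgebip} into a global bound by summing it over a well-chosen collection of edges. The natural choice is to sum over the edges of $A$ itself (each of which is an edge of $G$, so Lemma~\ref{lem:edgebip} applies): this gives
\[ \sum_{uv \in A} \bigl( d_A(u) + d_A(v) \bigr) \leq b(G)\sizeof{A}. \]
The key observation is that the left-hand side counts, for each vertex $v$, the quantity $d_A(v)$ once for every edge of $A$ incident to $v$, of which there are exactly $d_A(v)$; hence the left-hand side equals $\sum_{v \in V(G)} d_A(v)^2$.

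From here I would combine this with the handshake identity $\sum_{v} d_A(v) = 2\sizeof{A}$. By the Cauchy--Schwarz inequality (equivalently, convexity of $x \mapsto x^2$ or the power-mean inequality),
\[ \sum_{v \in V(G)} d_A(v)^2 \geq \frac{\bigl(\sum_{v} d_A(v)\bigr)^2}{n} = \frac{4\sizeof{A}^2}{n}. \]
Chaining the two displays yields $4\sizeof{A}^2/n \leq b(G)\sizeof{A}$, and dividing by $\sizeof{A}$ (the case $A = \emptyset$ being trivial) gives $\sizeof{A} \leq nb(G)/4$, which is the claimed bound since $\sizeof{A} = \aph(G)$.

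I do not expect a serious obstacle here; the content is really just repackaging Lemma~\ref{lem:edgebip}. The one point requiring a little care is the choice of \emph{which} edges to sum over: summing Lemma~\ref{lem:edgebip} over all of $E(G)$ would introduce the (uncontrolled) degrees of $G$ rather than those of $A$, whereas summing over $A$ is exactly what makes the left-hand side collapse to $\sum_v d_A(v)^2$ and lets the handshake identity and Cauchy--Schwarz finish the job cleanly.
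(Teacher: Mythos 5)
Your proposal is correct and is essentially identical to the paper's proof: sum Lemma~\ref{lem:edgebip} over the edges of $A$ to get $\sum_{v} d_A(v)^2 \leq b(G)\sizeof{A}$, then apply Cauchy--Schwarz with the handshake identity to bound that sum below by $4\sizeof{A}^2/n$. No issues.
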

\begin{proof}
  Let $A$ be any triangle-independent subset of $E(G)$. Applying
  Lemma~\ref{lem:edgebip} to all edges in $A$ and summing together
  the resulting inequalities gives
  \[ \sum_{u \in V(G)}d_A(u)^2 = \sum_{uv \in A}[ d_A(u) + d_A(v) ] \leq \sizeof{A}b(G). \]
  By the Cauchy--Schwarz Inequality, we have
  \[ \sum_{uv \in A}d_A(u)^2 \geq \frac{4\sizeof{A}^2}{n\hangs.} \]
  The desired inequality follows.
\end{proof}
\begin{lemma}\label{lem:bip-tau}
  For any graph $G$,
  \[ \taub(G) \leq \frac{n^2}{4} - \frac{b(G)^2}{4\hangs.} \]
\end{lemma}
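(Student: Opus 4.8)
The plan is to exhibit an explicit bipartite spanning subgraph of $G$ obtained by deleting few edges, using a maximum induced bipartite subgraph as scaffolding. Fix a vertex set $B$ with $\sizeof{B} = b(G)$ such that $G[B]$ is bipartite, say with parts $B_1$ and $B_2$, and let $W = V(G) \setminus B$, so that $\sizeof{W} = n - b(G)$. Every edge of $G$ that is not incident to $W$ lies inside $B$, and hence joins $B_1$ to $B_2$. This is the point of the construction: any bipartition of $V(G)$ that refines the partition $\{B_1, B_2\}$ on $B$ automatically cuts all of these edges, so the only edges we can fail to cut are those incident to $W$.

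Next I would partition $W$ at random: put each vertex of $W$ independently into $W_1$ or $W_2$ with probability $1/2$ each, and form the bipartition $V_1 = B_1 \cup W_1$, $V_2 = B_2 \cup W_2$. For an edge with both endpoints in $W$, the probability that it is monochromatic (both endpoints on the same side) is $1/2$; for an edge with one endpoint in $W$ and one in $B$, the $B$-endpoint is fixed but the $W$-endpoint is equally likely to land on either side, so again the probability of being monochromatic is $1/2$; and edges inside $B$ are never monochromatic. Hence the expected number of monochromatic edges is exactly half the number of edges of $G$ incident to $W$, and some choice of $W_1, W_2$ achieves at most this many. Deleting the monochromatic edges of that bipartition leaves a bipartite graph, so, writing $e_G(W)$ for the number of edges of $G$ inside $W$ and $e_G(W,B)$ for the number between $W$ and $B$,
\[ \taub(G) \leq \frac12\bigl(e_G(W) + e_G(W, B)\bigr) \leq \frac12\left[\binom{n-b(G)}{2} + (n-b(G))\,b(G)\right], \]
using only the trivial bounds $e_G(W) \le \binom{\sizeof{W}}{2}$ and $e_G(W,B) \le \sizeof{W}\,\sizeof{B}$.

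Finally I would simplify the right-hand side: it equals $\frac14\bigl(n - b(G)\bigr)\bigl(n + b(G) - 1\bigr)$, which is at most $\frac14\bigl(n^2 - b(G)^2\bigr)$ because $n \geq b(G)$, giving the claimed bound (in fact with a little room to spare).

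The argument has no serious obstacle; the one point that needs care is making the averaging respect the bipartition of $G[B]$, so that the up to $\binom{b(G)}{2}$ edges inside $B$ contribute nothing while the factor $\frac12$ is still harvested on every edge touching $W$. Anchoring $B_1$ and $B_2$ to opposite sides and randomizing only over $W$ is exactly what makes both of these happen simultaneously.
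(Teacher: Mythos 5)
Your proof is correct and follows essentially the same route as the paper: both fix a maximum induced bipartite subgraph on a vertex set $B$, randomly distribute the remaining vertices between the two sides, and observe that the expected number of uncut edges is half the number of edges not inside $G[B]$, which is then bounded by counting vertex pairs. Your accounting via $e_G(W) + e_G(W,B) \leq \binom{n-b(G)}{2} + (n-b(G))b(G)$ is just the identity $\binom{n}{2} - \binom{b(G)}{2}$ written out, so the two arguments coincide.
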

\begin{proof}
  This is essentially the $\delta = 0$ case of Proposition~2.5 of
  \cite{howbip}.  We sketch a probabilistic proof here. Let $B$ be the
  vertex set of a largest bipartite induced subgraph of $G$. If we
  randomly place the vertices of $V(G) \setminus B$ into the partite sets of
  $B$ and delete all edges within the partite sets, the expected
  number of deleted edges is $\frac{1}{2}\sizeof{E(G) \setminus E(G[B])}$; hence $G$ can be
  made bipartite by deleting at most this many edges.  Thus,
  \[ \taub(G) \leq \frac{1}{2}\sizeof{E(G) \setminus E(G[B])} \leq
  \frac{1}{2}\left[{n \choose 2} - {b(G) \choose 2}\right] \leq
  \frac{n^2}{4} - \frac{b(G)^2}{4\hangs,} \] where the second
  inequality uses the fact that for any vertex set $T \subset V(G)$,
  there are at most ${n \choose 2} - {\sizeof{T} \choose 2}$ edges in
  $E(G) \setminus E(G[T])$.
\end{proof}
\begin{corollary}\label{cor:nearperf}
  For any graph $G$,
  \[ \aph(G) + \tau(G) \leq \aph(G) + \taub(G) \leq \frac{5n^2}{16\hangs.} \]  
\end{corollary}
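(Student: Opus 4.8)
The plan is simply to combine the three preceding results. Since any edge set whose deletion makes $G$ bipartite also makes $G$ triangle-free, we have $\tau(G) \leq \taub(G)$, so it suffices to bound $\aph(G) + \taub(G)$. Writing $b = b(G)$ for brevity, Lemma~\ref{lem:bip-aph} gives $\aph(G) \leq nb/4$ and Lemma~\ref{lem:bip-tau} gives $\taub(G) \leq n^2/4 - b^2/4$; adding these yields
\[ \aph(G) + \taub(G) \leq \frac{1}{4}\left(n^2 + nb - b^2\right). \]

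It then remains only to maximize the right-hand side over the admissible range of $b$. Viewing $n^2 + nb - b^2$ as a quadratic in $b$ with negative leading coefficient, its maximum over all real $b$ occurs at $b = n/2$, where it equals $n^2 + n^2/2 - n^2/4 = 5n^2/4$. Substituting this bound back gives $\aph(G) + \taub(G) \leq 5n^2/16$, which together with $\tau(G) \leq \taub(G)$ is exactly the claim. Since $b(G)$ is an integer with $0 \leq b(G) \leq n$, the optimizing value $b = n/2$ may not be attained, but this can only strengthen the bound, so no case analysis is required.

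I do not expect a genuine obstacle here: the substantive work is already packaged in Lemmas~\ref{lem:bip-aph} and~\ref{lem:bip-tau}, and the corollary is a one-variable optimization once those are in hand. It is worth remarking only that the resulting bound is not sharp — the parameter $b(G)$ equals $2$ for $K_n$ and $n$ for $K_{n/2,n/2}$, so neither extremal graph comes close to making all three inequalities ($\tau \leq \taub$, the $\aph$ bound, and the $\taub$ bound) simultaneously tight — which is consistent with $5n^2/16$ being strictly larger than the conjectured value $n^2/4$.
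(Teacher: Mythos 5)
Your proposal is correct and follows essentially the same route as the paper: add the bounds from Lemmas~\ref{lem:bip-aph} and~\ref{lem:bip-tau}, use $\tau \leq \taub$, and maximize the quadratic $nb - b^2$ at $b = n/2$ to obtain $5n^2/16$. The arithmetic checks out, and your closing remark about non-sharpness (with $b(K_n)=2$ and $b(K_{n/2,n/2})=n$) is accurate, though not needed for the proof.
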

\begin{proof}
  From Lemma~\ref{lem:bip-aph} and Corollary~\ref{lem:bip-tau} we immediately have
  \[ \aph(G) + \taub(G) \leq \frac{n^2}{4} + \frac{nb(G)}{4} - \frac{b(G)^2}{4\hangs.} \]
  Since the product $x(n-x)$ is maximized when $x = n/2$, this implies
  \[ \aph(G) + \taub(G) \leq \frac{5n^2}{16\hangs,} \]
  as desired.
\end{proof}
While the parameter $\taub(G)$ has been extensively studied, the sum
$\aph(G) + \taub(G)$ has not, to our knowledge, been previously studied.
We have not found any graph $G$ with $\aph(G) + \taub(G) > \sizeof{V(G)}^2/4$,
so we close with the following conjecture, which strengthens Conjecture~\ref{coj:EGT}.
\begin{conjecture}\label{coj:bipcoj}
  For every $n$-vertex graph $G$, $\aph(G) + \taub(G) \leq n^2/4$.
\end{conjecture}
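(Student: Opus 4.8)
The plan is to push the argument of Corollary~\ref{cor:nearperf} further. Feeding Lemma~\ref{lem:bip-aph} and the intermediate bound $\taub(G) \le \frac12\left({n \choose 2} - {b(G) \choose 2}\right)$ from the proof of Lemma~\ref{lem:bip-tau} into the sum (rather than the weaker final form $\taub(G) \le n^2/4 - b(G)^2/4$) gives
\[ \aph(G) + \taub(G) \le \frac{n\, b(G)}{4} + \frac12\left({n \choose 2} - {b(G) \choose 2}\right) = \frac{n^2}{4} + \frac{(n - b(G))(b(G) - 1)}{4}, \]
so Conjecture~\ref{coj:bipcoj} reduces to erasing the error term $\frac14(n - b(G))(b(G)-1)$, which is positive exactly when $2 \le b(G) \le n-1$. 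The two boundary cases are understood: $b(G) = n$ means $G$ is bipartite, so $\taub(G) = 0$ and Mantel's Theorem finishes; and $b(G) = 2$ is realized by $K_n$, where one checks that the whole error comes from the wastefulness of the random-placement bound in Lemma~\ref{lem:bip-tau}, a maximum cut of $K_n$ beating a random extension of $G[B]$ by roughly $n/4$ edges.

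Step one is then to sharpen Lemma~\ref{lem:bip-tau}: having fixed the two parts of a largest induced bipartite subgraph $G[B]$, one should distribute the vertices of $V(G) \setminus B$ so as to produce a \emph{maximum} cut of the edges not already inside $G[B]$, not merely a random one, and --- crucially --- exploit the maximality of $b(G)$, which says no vertex outside $B$ can be absorbed into $B$ without creating an odd cycle and so forces a quantifiable number of unavoidable monochromatic edges. The target is a bound $\taub(G) \le \frac12\left({n \choose 2} - {b(G) \choose 2}\right) - g(G)$ with $g$ large enough that, added to $\aph(G) \le n\,b(G)/4$, the error term vanishes; calibrating $g$ against $K_n$ should reveal the right shape. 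In parallel one can try to improve Lemma~\ref{lem:bip-aph} when $b(G)$ is small: by Lemma~\ref{lem:edgebip} the vertices of $A$-degree exceeding $b(G)/2$ form an independent set in $G$, which constrains $A$ --- though the $K_n$ example shows the $\aph$ bound itself is already tight and cannot be improved unconditionally.

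A complementary line uses the paper's other main inequality $\aph(G) + \sizeof{E(G)} \le n^2/2$ from Section~\ref{sec:aphbound}. Since $\taub(G) = \sizeof{E(G)} - \mathrm{mc}(G)$, with $\mathrm{mc}(G)$ the maximum cut size, this gives $\aph(G) + \taub(G) \le n^2/2 - \mathrm{mc}(G)$, which already settles the conjecture whenever $\mathrm{mc}(G) = \floor{n^2/4}$ --- in particular for $K_n$ ($n$ even) and for any graph with a spanning balanced complete bipartite subgraph. One would then aim for a stability statement: if $\mathrm{mc}(G)$ falls short of $n^2/4$ then $\aph(G) + \sizeof{E(G)}$ should fall short of $n^2/2$ by at least as much. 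Dense graphs that are far from balanced-complete-bipartite (e.g.\ blow-ups of larger odd cliques) would be handled here, the sparse regime by the $b(G)$-bounds above, and the range of large minimum degree can borrow the equality $\tau(G) = \taub(G)$ of Bondy--Shen--Thomass\'e--Thomassen~\cite{BSTT} and Balogh--Keevash--Sudakov~\cite{Balogh} to replace $\taub$ by the perhaps more tractable $\tau$.

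The main obstacle is exactly the one Erd\H os, Gallai, and Tuza flagged for Conjecture~\ref{coj:EGT}: the bound is attained with no slack on a diverse family, already including both $K_n$ (where $\taub$ dominates) and $K_{n/2,n/2}$ (where $\aph$ dominates) and various hybrids, so no single inequality parameterized only by $b(G)$, or only by $\mathrm{mc}(G)$, or only by $\sizeof{E(G)}$, can be simultaneously sharp everywhere. Any proof seems to need a structural case split --- plausibly of stability or regularity flavor --- separating the graphs close to complete multipartite graphs from the rest. Settling Conjecture~\ref{coj:bipcoj} first for restricted classes, say $K_4$-free graphs or graphs of bounded chromatic number, would be a natural proving ground.
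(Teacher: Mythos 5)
You should first note that this statement is posed in the paper as an open conjecture --- the paper offers no proof of it, and explicitly reports only computer verification on graphs with at most $8$ vertices --- and your submission is, by its own admission, a research plan rather than a proof. The correct content in it is real and worth recording: the identity
\[ \frac{n\,b(G)}{4} + \frac{1}{2}\left[{n \choose 2} - {b(G) \choose 2}\right] = \frac{n^2}{4} + \frac{(n-b(G))(b(G)-1)}{4} \]
checks out and cleanly isolates the error term left over from the argument of Corollary~\ref{cor:nearperf}; the boundary case $b(G) = n$ is indeed immediate; and the observation that $\taub(G) = \sizeof{E(G)} - \mathrm{mc}(G)$, combined with Theorem~\ref{thm:match}, yields $\aph(G) + \taub(G) \leq n^2/2 - \mathrm{mc}(G)$, which does settle the conjecture for every graph whose maximum cut has at least $n^2/4$ edges (in particular $K_n$ with $n$ even, and any graph containing a spanning balanced complete bipartite subgraph). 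That last deduction is a genuine partial result not stated in the paper.

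However, neither of the two steps that would close the argument is carried out, and each is essentially the conjecture restated. Your proposed sharpening of Lemma~\ref{lem:bip-tau}, namely $\taub(G) \leq \frac{1}{2}[{n \choose 2} - {b(G) \choose 2}] - g(G)$ with $g(G) \geq (n - b(G))(b(G)-1)/4$, comes with no candidate for $g$ and no mechanism for converting the maximality of $B$ into a count of unavoidable monochromatic edges; moreover, in the regime where $\aph(G) = nb(G)/4$ is attained, this bound is exactly what the conjecture demands, so nothing has been reduced. Likewise, the ``stability statement'' that $\aph(G) + \sizeof{E(G)}$ falls short of $n^2/2$ by at least $n^2/4 - \mathrm{mc}(G)$ is, after substituting $\taub(G) = \sizeof{E(G)} - \mathrm{mc}(G)$, literally equivalent to the inequality $\aph(G) + \taub(G) \leq n^2/4$ being proved. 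So the conjecture remains open: what you have is a correct reframing of the problem together with one new family of special cases, not a proof.
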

Computer search suggests that Conjecture~\ref{coj:bipcoj} is true for all graphs on
at most $8$ vertices.
\section{Bounding $\aph(G)$}\label{sec:aphbound}
In this section, we will obtain the bound $\aph(G) \leq \lfrac{n^2}{2}
- m$, where $m = \sizeof{E(G)}$.  We first need one quick lemma.
\begin{lemma}\label{lem:vertmax}
  Let $G$ be an $n$-vertex graph, and let $A \subset E(G)$ be
  triangle-independent.  For every edge $uv \in A$, we have $d_A(u)
  \leq n - d_G(v)$.
\end{lemma}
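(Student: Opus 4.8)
The plan is to show that, for the fixed edge $uv \in A$, the vertex sets $N_A(u)$ and $N_G(v)$ are disjoint subsets of $V(G)$. Once this is established, the bound is immediate: both sets live inside the $n$-element set $V(G)$, so $d_A(u) + d_G(v) = \sizeof{N_A(u)} + \sizeof{N_G(v)} \leq n$, which rearranges to $d_A(u) \leq n - d_G(v)$. This is in the same spirit as the proof of Lemma~\ref{lem:edgebip}, but slightly simpler, since only one of the two neighborhoods involved is taken inside $A$.

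To verify the disjointness, I would argue by contradiction: suppose some vertex $w$ lies in both $N_A(u)$ and $N_G(v)$. Then $uw \in A$ and $vw \in E(G)$, and since also $uv \in A \subset E(G)$, the vertices $u$, $v$, $w$ span a triangle of $G$. This triangle contains the two distinct edges $uv$ and $uw$, \emph{both} of which lie in $A$, contradicting the hypothesis that $A$ is triangle-independent. Hence $N_A(u) \cap N_G(v) = \emptyset$, completing the proof.

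There is no real obstacle here; the argument is a single observation. The only point that warrants a moment's care is confirming that the witnessing triangle $uvw$ really does contain two edges of $A$ — namely $uv$ (which we assumed is in $A$) together with $uw$ (which membership of $w$ in $N_A(u)$ provides) — rather than merely one, so that triangle-independence is genuinely violated.
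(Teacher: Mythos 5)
Your proof is correct and is essentially identical to the paper's: both establish that $N_A(u)$ and $N_G(v)$ are disjoint, because a common neighbor $w$ would put the two $A$-edges $uv$ and $uw$ into the triangle $uvw$, and then conclude $d_A(u) \leq n - d_G(v)$ by counting inside $V(G)$. Your version is just slightly more explicit about checking that the two offending edges are distinct.
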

\begin{proof}
  The set $A$ cannot contain any edge $uw$ where $w \in N_G(v)$, since
  then $A$ would contain two edges of the triangle $uvw$. Hence $N_A(u) \subset V(G) \setminus N_G(w)$.
\end{proof}
The \emph{join} of the graphs $G_1, \ldots, G_t$, written $G_1 \join \cdots \join G_t$,
is the graph obtained from the disjoint union $G_1 + \cdots + G_t$ by adding all edges
between vertices from different $G_i$.
\begin{theorem}\label{thm:match}
  For an $n$-vertex graph $G$ with $m$ edges,
  \begin{equation}
    \label{eq:dog}
    \aph(G) \leq \frac{n^2}{2} - m.
  \end{equation}   
  Equality holds if and only if there exist $r_1, \ldots, r_t \geq 1$
  such that $G \iso K_{r_1, r_1} \join \cdots \join K_{r_t, r_t}$.
\end{theorem}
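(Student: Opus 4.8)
The plan is to derive \eqref{eq:dog} by pairing Lemma~\ref{lem:vertmax} with an elementary inequality about degree sequences, and to obtain the extremal characterization by checking when each step of that derivation is tight. Fix a triangle-independent set $A \subset E(G)$, let $V_1 = \{v : d_A(v) \geq 1\}$, and let $H$ be the graph on $V_1$ with edge set $A$, so $H$ has no isolated vertices. Lemma~\ref{lem:vertmax} gives $d_A(u) \leq n - d_G(v)$ for every edge $uv \in A$, hence $\max_{u \in N_A(v)} d_A(u) \leq n - d_G(v)$ for every $v \in V_1$. The crux is the claim that \emph{every graph $H$ with no isolated vertices satisfies $\sum_v \max_{u \in N_H(v)} d_H(u) \geq \sum_v d_H(v)$.} Granting it, applying it to our $H$, and using $d_G(v) \leq n-1$ for $v \notin V_1$,
\[
  2\sizeof{A} = \sum_{v \in V_1} d_A(v) \;\leq\; \sum_{v \in V_1}\max_{u \in N_A(v)} d_A(u) \;\leq\; \sum_{v \in V_1}\bigl(n - d_G(v)\bigr) \;\leq\; \sum_{v \in V(G)}\bigl(n - d_G(v)\bigr) = n^2 - 2m,
\]
which is exactly \eqref{eq:dog}.

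To prove the claim I would use discharging. Give $v$ the charge $\mu(v) = \max_{u \in N(v)} d(u) - d(v)$ and call $v$ \emph{deficient} if $\mu(v) < 0$, i.e.\ if every neighbor of $v$ has strictly smaller degree. Then deficient vertices form an independent set (adjacent vertices cannot each have strictly smaller degree than the other), and if $v$ is deficient then each neighbor $u$ of $v$ has surplus $\mu(u) \geq d(v) - d(u) > 0$. Now have each deficient $v$ pull, from each neighbor $u$, an amount proportional to $d(v) - d(u)$ summing to $\lvert\mu(v)\rvert$. Since $\lvert\mu(v)\rvert$ is at most $d(v)$ minus the average degree of $v$'s neighbors, the amount $u$ sends $v$ is at most $\bigl(d(v)-d(u)\bigr)/d(v)$; summing over the at most $d(u)$ deficient neighbors of $u$ and using $d(u)/(d(u)+1) < 1$ shows $u$ sends away at most $\mu(u)$, so every final charge --- hence $\sum_v\mu(v)$ --- is nonnegative. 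The same estimate shows any vertex with a deficient neighbor ends with positive charge, so equality forces there to be no deficient vertices, whence every surplus is zero and adjacent vertices always have equal degree: equality holds precisely when $H$ is a disjoint union of regular graphs. This claim and its equality case are where I expect the real work to lie; the rest is bookkeeping around Lemma~\ref{lem:vertmax}.

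For the characterization, assume $\aph(G) = \tfrac{n^2}{2} - m$ and let $A$ attain it. Equality throughout the displayed chain forces $V_1 = V(G)$, $\max_{u \in N_A(v)} d_A(u) = n - d_G(v)$ for all $v$, and equality in the claim for $H = (V(G),A)$; by the previous paragraph $(V(G),A)$ is then a disjoint union of regular graphs, and combining this with the second condition makes $d_A(u) = n - d_G(v)$ hold for \emph{every} edge $uv \in A$ --- that is, equality in Lemma~\ref{lem:vertmax}. Inspecting its proof, equality means $N_A(u) = V(G) \setminus N_G(v)$ for every edge $uv \in A$. Fixing a component $C$ of $(V(G),A)$ with an edge $uv$, this identity shows that $X := V(G)\setminus N_G(v)$ and $Y := V(G)\setminus N_G(u)$ are exactly $N_A(u)$ and $N_A(v)$, that they partition $C$ with $A[C] = K_{X,Y}$, and that $X$ and $Y$ are independent in $G$ (else some triangle of $G$ contains two edges of $A$); regularity gives $\sizeof X = \sizeof Y =: r$, so $A[C] \iso K_{r,r}$ and in fact $G[C] = A[C]$. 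Moreover $N_A(u) = V(G)\setminus N_G(v)$ says each vertex's non-neighbors lie entirely in its own class, so every edge between distinct components of $(V(G),A)$ belongs to $G$; hence $G \iso K_{r_1,r_1}\join\cdots\join K_{r_t,r_t}$. Conversely, such a join has $m = \tfrac{n^2}{2} - \sum_i r_i^2$ edges and the union of the edge sets of its factors is a triangle-independent set of size $\sum_i r_i^2 = \tfrac{n^2}{2} - m$, so equality holds.
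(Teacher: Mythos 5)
Your argument is correct, but it routes the key combinatorial step through a genuinely different device than the paper. The paper fixes a maximal matching $M$ in $A$: for matched vertices it applies Lemma~\ref{lem:vertmax} to each vertex's mate, and for the (independent, in $A$) unmatched vertices it uses a separate witness argument ($vw \in A$ forces $vw' \notin E(G)$ for the mate $w'$), yielding $\sum_v d_A(v) \leq \sum_v [n - d_G(v)]$; the equality analysis then runs through ``every maximal matching of $A$ is perfect,'' hence $A$ is $P_4$-free and triangle-free with a perfect matching, hence its components are balanced complete bipartite. You instead isolate the self-contained lemma that any graph $H$ without isolated vertices satisfies $\sum_v \max_{u \in N_H(v)} d_H(u) \geq \sum_v d_H(v)$, with equality iff every component is regular, and prove it by discharging; composing this with Lemma~\ref{lem:vertmax} gives the same degree-sum bound, and in the equality case regularity of the components of $A$ plus tightness of Lemma~\ref{lem:vertmax} on every edge ($N_A(u) = V(G)\setminus N_G(v)$) pins down each component as a $K_{r,r}$ and forces all cross-component pairs to be edges of $G$, exactly recovering the join structure. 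I checked your discharging lemma and it is true; the one place your write-up is too terse is the bound on the total charge a vertex $u$ sends when its deficient neighbors have varying degrees --- the clean estimate is $\sum_i \bigl(1 - d(u)/d(v_i)\bigr) \leq d(u)\bigl(1 - d(u)/D\bigr) = \tfrac{d(u)}{D}(D - d(u)) < D - d(u) \leq \mu(u)$ with $D = \max_i d(v_i)$, which covers the general case rather than only the case $d(v_i) = d(u)+1$ suggested by your phrase ``$d(u)/(d(u)+1) < 1$.'' Your lemma is of independent interest and arguably gives a cleaner equality analysis (regularity drops out immediately, with no need for the $P_4$-freeness detour); the paper's matching argument is more elementary and avoids discharging. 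Your verification of the converse direction matches the paper's.
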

\begin{proof}
  Let $A \subset E(G)$ be triangle-independent, and let $M$ be a
  maximal matching in $A$.  We study the degree sum $\sum_{v \in
    V(G)}d_A(v)$ by splitting it into the sum $\sum_{v \in V(M)}d_A(v)
  + \sum_{v \notin V(M)}d_A(v)$.

  For each $v$ covered by $M$, let $v'$ be its mate in $M$.  Applying
  Lemma~\ref{lem:vertmax} to both endpoints of each edge in $M$, we
  obtain the bound
  \[
    \sum_{v \in V(M)}d_A(v) \leq \sum_{v \in V(M)}[n - d_G(v')] = \sum_{v \in V(M)}[n - d_G(v)].
    \]
  To bound $\sum_{v \notin V(M)}d_A(v)$, we first observe that the vertices not
  covered by $M$ form an independent set in $A$, since any edge
  joining such vertices could be added to obtain a larger matching.

  Now let $v$ be any vertex not covered by $M$.  For each edge $ww'
  \in M$, if $vw \in A$ then $vw' \notin E(G)$, since otherwise $A$
  contains two edges of the triangle $vww'$. Hence $d_A(v) \leq n - 1 -
  d_G(v)$, since each $A$-edge $vw$ is witnessed by a non-$G$-edge $vw'$ where $w' \neq v$.
  Summing this inequality over all uncovered $v$ yields
  \begin{equation}
    \label{eq:cat}
    \sum_{v \notin V(M)}d_A(v) \leq \sum_{v \notin V(M)}[n - 1 - d_G(v)] \leq \sum_{v \notin V(M)}[n - d_G(v)].
  \end{equation}
  Combining this with the bound on $\sum_{v \in V(M)}d_A(V)$ and applying the degree-sum formula for $G$ yields
  \[ \sum_{v \in V(G)}d_A(v) \leq \sum_{v \in V(G)}[n - d_G(v)] = n^2 - 2m. \]
  Applying the degree-sum formula for $A$ completes the proof of the first claim.
  \medskip

  Now we characterize the graphs for which equality holds. First we
  argue that if $G \iso K_{r_1,r_1} \join \cdots \join K_{r_t, r_t}$,
  then $\aph(G) \geq \frac{n^2}{2} - m$.  Let $V_i$ be the subset of
  $V(G)$ containing the vertices of the $i$th graph in this join, and let $A =
  E(G[V_1]) \cup \cdots \cup E(G[V_t])$. The set $A$ is
  triangle-independent: if $uvw$ is a triangle in $G$ with $uv \in A$,
  then $uv \in E(G[V_i])$ for some $i$, so $u$ and $v$ have no common
  neighbors in $G[V_i]$, so that $uw \in A$ implies $vw \notin E(G)$ and
  vice versa.
  Thus,
  \[ \aph(G) \geq \sizeof{A} = \sum_{i=1}^t (r_i)^2 = \sum_{i=1}^t
  \left(r_i + 2{r_i \choose 2}\right) = \frac{n}{2} + 2\sum{r_i
    \choose 2} = \frac{n^2}{2} - m. \] Next we show that these are the
  only graphs for which equality holds.  Let $A$ be a
  triangle-independent subset of $G$ with $\sizeof{A} = \lfrac{n^2}{2}
  - m$. Since equality holds in \eqref{eq:dog}, equality must also hold in
  \eqref{eq:cat} for every maximal matching $M \subset A$.  This is
  only possible if the sum $\sum_{v \notin V(M)}[n - 1 - d_G(v)]$ is
  empty.  Thus, all maximal matchings in $A$ are perfect matchings.

  Let $P_4$ denote the path on four vertices. We claim that if $A$
  contains an induced subgraph isomorphic to $P_4$, then $A$ contains
  a nonperfect maximal matching. Let $v_1,\ldots,v_4$ be the vertices
  of an induced copy of $P_4$ in $A$, written in order, and let $M$ be
  any maximal matching containing the edges $v_1v_2$ and $v_3v_4$; we
  may assume that $M$ is a perfect matching. Now $(M \setminus
  \{v_1v_2,v_3v_4\}) \cup \{v_2v_3\}$ is a nonperfect maximal
  matching, since $v_1v_4 \notin A$.

  Thus, if equality holds in \eqref{eq:dog}, then $A$ is a triangle-free
  graph with a perfect matching and no induced $P_4$. This implies
  that every component of $A$ is a balanced complete bipartite graph.

  Next we claim that if $u$ and $v$ are vertices in different
  components of $A$, then $uv \in E(G)$. Suppose $uv \notin E(G)$, and
  let $G' = G + uv$.  Now $A$ still contains at most one edge from
  any triangle of $G'$: if not, then $A$ contains two edges of $uvz$,
  for some $z \in V(G')$.  Since $uv \notin A$, this implies that $uz
  \in A$ and $vz \in A$, contradicting the hypothesis that $u$ and $v$
  are in different components of $A$. It follows that $\sizeof{A} \leq
  \lfrac{n^2}{2} - \sizeof{E(G')} < \lfrac{n^2}{2} - m$, contradicting
  the assumption that $\sizeof{A} = \lfrac{n^2}{2} - m$.

  We have shown that the vertices of $G$ can be covered by
  vertex-disjoint complete bipartite graphs, and that if $u$ and $v$
  are covered by different graphs, then $uv \in E(G)$. This implies
  that $G \iso K_{r_1,r_1} \join \cdots \join K_{r_t, r_t}$.
\end{proof}
\bibliographystyle{amsplain}\bibliography{sumbib}
\end{document}